\newtheorem{thm}{Theorem}
\newtheorem{prop}[thm]{Proposition}
\newcommand{\Center}{\mathbf{Z}}
\newcommand{\Centralizer}{\mathbf{C}}
\newcommand{\Normalizer}{\mathbf{N}}
\newcommand{\Fitting}{\mathbf{F}}
\newcommand{\Layer}{\mathbf{E}}
\newcommand{\Fit}{\mathbf{F}^{*}}
\begin{document}

\title[conjugacy classes of $\pi$-elements in finite groups]
{On the number of conjugacy classes\\ of $\pi$-elements in finite
groups}

\author{Attila Mar\'oti} \address{Alfr\'ed R\'enyi Institute of
  Mathematics, Re\'altanoda utca 13-15, H-1053, Budapest, Hungary \and Fachbereich
Mathematik, Technische Universit\"{a}t Kaiserslautern, Postfach
3049, 67653 Kaiserslautern, Germany}
\email{maroti.attila@renyi.mta.hu \and maroti@mathematik.uni-kl.de}

\author{Hung Ngoc Nguyen}
\address{Department of Mathematics, The University of Akron, Akron,
Ohio 44325, USA} \email{hungnguyen@uakron.edu}

\thanks{The research of the first author was supported by a Marie Curie
International Reintegration Grant within the 7th European
Community Framework Programme, by the J\'anos Bolyai Research
Scholarship of the Hungarian Academy of Sciences, by an
Alexander von Humboldt Fellowship for Experienced Researchers,
by OTKA K84233, and by the MTA RAMKI Lend\"ulet Cryptography Research Group.}

\subjclass[2010]{Primary 20E45}

\keywords{finite groups, conjugacy classes, $\pi$-elements}
\date{\today}

\begin{abstract}
Let $G$ be a finite group and $\pi$ be a set of primes. Put
$d_{\pi}(G) = k_{\pi}(G)/|G|_{\pi}$ where $k_{\pi}(G)$ is the number
of conjugacy classes of $\pi$-elements in $G$ and $|G|_{\pi}$ is the
$\pi$-part of the order of $G$. In this paper we initiate the study
of this invariant by showing that if $d_{\pi}(G) > 5/8$ then $G$
possesses an abelian Hall $\pi$-subgroup, all Hall $\pi$-subgroups
of $G$ are conjugate, and every $\pi$-subgroup of $G$ lies in some
Hall $\pi$-subgroup of $G$. Furthermore we have $d_{\pi}(G) = 1$ or
$d_{\pi}(G) = 2/3$. This extends and generalizes a result of
W.~H.~Gustafson.
\end{abstract}
\maketitle

\section{Introduction}

For a finite group $G$ let $d(G)$ be the probability that two
elements of $G$ commute. It is easy to see that $d(G) = k(G)/|G|$
where $k(G)$ denotes the number of conjugacy classes of $G$.
Several authors have studied this invariant under the name of
commutativity degree~\cite{Lescot,Erfanian-Rezaei} or commuting
probability \cite{Gustafson,Rusin,GuralnickRobinson}.

Let $\pi(G)$ be the set of prime divisors of the order of $G$ and
$\pi$ a non-empty set of primes. Furthermore, let $k_{\pi}(G)$ be
the number of conjugacy classes of $\pi$-elements in $G$ and
${|G|}_{\pi}$ the $\pi$-part of the order of $G$. Since $d(G)$
encodes a lot of structural information of $G$, it is expected
that $d_{\pi}(G):= k_{\pi}(G)/{|G|}_{\pi}$ also provides some
information on the $\pi$-local structure of $G$.

Our first observation is that $d(G) \leq d_{\pi}(G) \leq d_{\mu}(G)$
whenever $\mu$ is a subset of $\pi$, see part~(1) of
Proposition~\ref{p1}. In particular, if $\mu$ consists of a single
prime, then $d_{\pi}(G) \leq d_{\mu}(G) \leq 1$ by Sylow's theorems.
In fact, we have $d_{\pi}(G) \leq d(P)$ where $P$ is any Sylow
$p$-subgroup of $G$ for any prime $p$ in $\pi$. From this and a
result of P.\,M.~Neumann~\cite{Neumann} it follows that if
$d_{\pi}(G)$ is bounded from below by a positive constant then $P$
is bounded by abelian by bounded; that is, $P$ is `almost' abelian
for every $p \in \pi$. Furthermore, by the same reason, if $G$ is
$\pi$-solvable and $d_{\pi}(G)$ is bounded from below by a positive
constant then every Hall $\pi$-subgroup of $G$ is bounded by abelian
by bounded.

One of the goals of this work is to impose an explicit lower bound for
$d_{\pi}(G)$ in order to ensure the existence of an abelian Hall
$\pi$-subgroup in $G$.

\begin{thm}
\label{main} Let $\pi$ be a set of primes and $G$ a finite group
with $d_{\pi}(G) > 5/8$. Then $G$ contains an abelian Hall
$\pi$-subgroup, all Hall $\pi$-subgroups of $G$ are conjugate
in $G$, and every $\pi$-subgroup of $G$ lies in some Hall $\pi$-subgroup of $G$.
Furthermore $d_{\pi}(G) = 1$ or $d_{\pi}(G) = 2/3$.
\end{thm}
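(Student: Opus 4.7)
The plan is to split the theorem into three stages: (i) abelianness of the Sylow $p$-subgroups for $p \in \pi$, via the classical Gustafson theorem; (ii) the Hall-theoretic conclusions (existence, conjugacy, inclusion, abelianness), by induction on $|G|$ combined with CFSG-based input; and (iii) pinning down $d_\pi(G)$ by a Burnside orbit-counting argument.

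For stage (i), Proposition~\ref{p1}(1) gives $d_\pi(G) \le d(P)$ for every Sylow $p$-subgroup $P$ of $G$ with $p \in \pi$. The hypothesis $d_\pi(G) > 5/8$ therefore yields $d(P) > 5/8$, and Gustafson's theorem forces $P$ to be abelian. Hence every such $P$ is abelian.

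Stage (ii) is the main obstacle, since existence of Hall $\pi$-subgroups in arbitrary finite groups is not automatic. The plan is to induct on $|G|$; a minimal counterexample should reduce to an (almost-)simple configuration, where one invokes classification-based results on finite simple groups with abelian Sylow $p$-subgroups for $p \in \pi$ and on their Hall subgroups to complete the argument. Once a Hall $\pi$-subgroup $H$ is in hand whose Sylow subgroups coincide with Sylow subgroups of $G$, the abelianness of $H$ reduces to showing $H$ is nilpotent, after which $H$ is the direct product of its abelian Sylow subgroups.

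For stage (iii), set $K := N_G(H)/C_G(H) \le \Aut(H)$. Because $H$ is abelian, $H \le C_G(H)$, and $|N_G(H)/H|$ divides the $\pi'$-number $|G:H|$, so $\gcd(|K|,|H|) = 1$. By stage~(ii), every conjugacy class of $\pi$-elements of $G$ meets $H$, and two elements of $H$ are $G$-conjugate if and only if they are $N_G(H)$-conjugate, so $k_\pi(G)$ equals the number of $K$-orbits on $H$. Burnside's orbit-counting lemma then gives
\begin{equation*}
  d_\pi(G) \;=\; \frac{1}{|K|\,|H|} \sum_{k \in K} |\Fix_H(k)|.
\end{equation*}
For non-identity $k \in K$, $\Fix_H(k)$ is a proper subgroup of $H$, so $|\Fix_H(k)|/|H| \le 1/q$ for some prime $q$ dividing $|H|$. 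A short case analysis combined with the coprimality $\gcd(|K|,|H|) = 1$ then shows that $|K| \ge 4$ forces $d_\pi(G) \le 5/8$; that $|K| = 2$ forces $|H|$ odd, excluding the value $3/4$ and leaving only $d_\pi(G) = 2/3$; that $|K| = 3$ likewise yields $d_\pi(G) = 2/3$; and that $|K| = 1$ gives $d_\pi(G) = 1$. Hence $d_\pi(G) \in \{1, 2/3\}$, completing the proof.
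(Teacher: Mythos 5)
Your stage (i) is fine and matches the paper's use of Gustafson's theorem via $d_\pi(G)\le d_p(G)\le d(P)$. The real problem is stage (ii), which you correctly identify as the main obstacle but then do not actually prove: ``induct on $|G|$, reduce to an almost simple configuration, invoke classification results on Hall subgroups of simple groups'' is a plan, not an argument, and it is not clear it can be made to work. In a general finite group Hall $\pi$-subgroups need not exist, need not be conjugate when they do exist (already $\PSL_2(7)$ has two conjugacy classes of Hall $\{2,3\}$-subgroups), and the $D_\pi$-property can fail; so the hypothesis $d_\pi(G)>5/8$ must enter the reduction in an essential way, and your sketch provides no mechanism for that. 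The paper's actual route is quite different and is CFSG-free: Proposition~\ref{p2} shows that for a non-abelian simple composition factor $S$ that is not $\pi$-solvable one must have $\pi(S)\cap\pi=\{3\}$ (otherwise $d_\pi(G)\le d_p(S)\le 5/8$ for some $p\ne 3$), hence $G$ is $\mu$-solvable for $\mu=\pi\setminus\{3\}$ with $d_\mu(G)=1$ and $\mu$ consisting of primes at least $5$; an averaging argument over a Hall $\mu$-subgroup $H_1$, using $d_\pi(G)=\frac{1}{|H_1|}\sum_{h\in H_1}k_3(\Centralizer_G(h))/|G|_3>5/8$, then shows that a Sylow $3$-subgroup $P$ centralizes a full Hall $\mu$-subgroup $K$, producing the abelian Hall $\pi$-subgroup $K\times P$ explicitly. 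That is the idea your proposal is missing.

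Stage (iii) also leans on more than you have established. The statement that two elements of the abelian Hall subgroup $H$ are $G$-conjugate if and only if they are $N_G(H)$-conjugate is a Burnside-type fusion claim which, for Hall rather than Sylow subgroups, requires conjugacy of Hall $\pi$-subgroups inside centralizers --- i.e.\ more than the bare conclusions of stage (ii). Moreover your $|K|=3$ subcase is not quite right: an element of order $3$ acting on an abelian group $H$ with $3\nmid|H|$ cannot have fixed subgroup of index $2$ (orbit counting forces $3$ to divide $|H|-|\Fix_H(k)|$), so that case is vacuous rather than yielding $2/3$; this does not affect the conclusion but shows the ``short case analysis'' needs care. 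The paper sidesteps all of this by proving $d_p(G)=1$ for every $p\in\pi\setminus\{3\}$, whence $d_\pi(G)=d_3(G)=d_3(\Normalizer_G(P))$ by the classical Burnside fusion lemma for the abelian Sylow $3$-subgroup, and the values $2/3$ and $1$ are read off there.
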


This theorem can be viewed as a local version and extension of
Gustafson's result~\cite{Gustafson} stating that if $d(G) =
d_{\pi(G)}(G) > 5/8$ then $G$ is abelian. We note that the bound
$5/8$ in the theorem is tight since if $G$ is the direct product of
a group of odd order and the dihedral group $D_{8}$ then $d_{ 2 }(G)
= 5/8$. (Here and in what follows, for a prime $p$ we write
$k_{p}(G)$ and $d_{p}(G)$ in place of $k_{\{ p \}}(G)$ and $d_{\{
p\}}(G)$ respectively.) Also, from the condition $d_{\pi}(G) > 5/8$
it does not follow that $G$ is $\pi$-solvable. For if $G$ is a
non-abelian simple group with a Sylow $3$-subgroup of order $3$ then
$G$ is not $3$-solvable but $d_{3}(G) = 2/3$ by
Proposition~\ref{Burnside}.

Our next goal is to describe groups $G$ with $d_{\pi}(G) = 1$ or
$d_{\pi}(G) = 2/3$. By Proposition~\ref{p3} we see that $d_{\pi}(G)
= 1$ if and only if $G$ has a normal $\pi$-complement and an abelian Hall $\pi$-subgroup. On the
other hand, by Theorem \ref{main} and Propositions~\ref{p1}
and~\ref{p2} we see that $d_{\pi}(G) = 2/3$ if and only if $3 \in
\pi$, $2 \not\in \pi$, $d_{3}(G) = 2/3$ and $d_{\pi \setminus
\{3\}}(G) = 1$. In the next theorem we describe groups $G$ with
$d_{3}(G) = 2/3$. By Propositions~\ref{FG},~\ref{p3} and
Theorem~\ref{main}, for this we may assume that $O_{3'}(G) = 1$.

\begin{thm}
\label{main2} Let $G$ be a finite group with $d_{3}(G) = 2/3$ and
$O_{3'}(G) = 1$. Let $P$ be a Sylow $3$-subgroup in $G$. Then $P$ is
abelian, $\Normalizer_{G}(P)/\Centralizer_{G}(P)$ has order $2$,
$[P,\Normalizer_{G}(P)]$ has order $3$, and one of the following
holds.
\begin{enumerate}
\item[\textup{(1)}] $P$ is a self-centralizing normal subgroup in $G$; or

\item[\textup{(2)}] $G = A \times B$ where $A$ is an almost simple finite group with a Sylow $3$-subgroup of order $3$ contained in the socle of $A$ and $B$ is an abelian $3$-group.
\end{enumerate}
\end{thm}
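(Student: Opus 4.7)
First I would apply Theorem~\ref{main} with $\pi=\{3\}$ to obtain that $P$ is abelian; the remaining claims concern the action of $\Normalizer_G(P)$ on $P$. Since fusion for an abelian Sylow subgroup is controlled by its normalizer, $k_3(G)$ equals the number of orbits of $W := \Normalizer_G(P)/\Centralizer_G(P)$ on $P$. Burnside's orbit-counting lemma together with $d_3(G)=2/3$ gives $\sum_{w\in W}|\Centralizer_P(w)|=(2/3)|P||W|$; since $W$ acts faithfully on $P$ and $|\Centralizer_P(w)|\le|P|/3$ for $w\ne 1$, this forces $|W|=2$ with equality throughout, and the non-identity $t\in W$ satisfies $|\Centralizer_P(t)|=|P|/3$. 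Because $P$ is abelian, $p\mapsto[p,t]$ is a homomorphism with kernel $\Centralizer_P(t)$, so $|[P,\Normalizer_G(P)]|=3$.

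If $P\unlhd G$, then $\Centralizer_G(P)/P$ is a $3'$-group, and since $P\le\Center(\Centralizer_G(P))$ we can write $\Centralizer_G(P)=P\times H$ for a unique Hall $3'$-subgroup $H$. This $H$ is characteristic in $\Centralizer_G(P)\unlhd G$, so $H\le O_{3'}(G)=1$ and $\Centralizer_G(P)=P$, giving case~(1).

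Assume now $P$ is not normal. Using $\Centralizer_G(\Fit(G))\le\Fit(G)$ and the fact that $\Fit(G)=O_3(G)$ (as $O_{3'}(G)=1$), $\Layer(G)=1$ would yield $O_3(G)\le P\le\Centralizer_G(O_3(G))\le O_3(G)$, forcing the contradiction $P=O_3(G)\unlhd G$; so $G$ has a component. For each component $L_i$, the elements of $\Normalizer_{L_i}(P_{L_i})$ normalize $P$ (they centralize the other components and $\Centralizer_P(\Layer(G))$), producing an injection $W_{L_i}:=\Normalizer_{L_i}(P_{L_i})/\Centralizer_{L_i}(P_{L_i})\hookrightarrow W$; distinct $W_{L_i}$'s commute and intersect trivially in $W$ because they act on distinct direct factors of $P$. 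Since $O_{3'}(G)=1$ rules out $P_{L_i}=1$ (the normal closure of a $3'$-component would be a normal $3'$-subgroup) and since a quasisimple group has no proper normal $3$-complement, Burnside's normal $p$-complement theorem gives $|W_{L_i}|\ge 2$. The constraint $|W|=2$ therefore leaves a unique component $L$ with $W_L=W$. The focal subgroup theorem then yields $P_L=P_L\cap L=[P_L,\Normalizer_L(P_L)]=[P,\Normalizer_G(P)]$ of order $3$. If $\Center(L)=P_L$, then $L/\Center(L)$ would be a non-abelian simple $3'$-group (a Suzuki group) with a $C_3$-cover, which is impossible since the Schur multiplier of every Suzuki group has no $3$-part; hence $L$ is simple.

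The same commuting-subgroup argument inside $\Centralizer_G(L)$ forces $W_{\Centralizer_G(L)}=1$, whence Burnside gives $\Centralizer_G(L)$ a normal $3$-complement that is characteristic there, hence in $O_{3'}(G)=1$. So $\Centralizer_G(L)$ equals its Sylow $3$-subgroup $\Centralizer_P(L)$, and combined with $[O_3(G),L]\le O_3(G)\cap L=1$ we get $B:=\Centralizer_G(L)=O_3(G)$, abelian. Choosing $t\in\Normalizer_L(P_L)$ as the non-identity representative of $W$, $t\in L$ centralizes $B\le\Centralizer_G(L)$, so fusion gives $x^G=x^{\Normalizer_G(P)}=\{x\}$ for $x\in B$; hence $B\le\Center(G)$. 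The quotient $A_0:=G/B$ is almost simple with socle $L$, and by the Classification $|L|_3=3$ implies $\Out(L)$ has order coprime to $3$, so $A_0$ has Sylow $3$-subgroup $P_L$ contained in its socle. Finally, the central extension $1\to B\to G\to A_0\to 1$ is classified by $H^2(A_0,B)$: the standard transfer argument gives $M(L)_3=0$ since $P_L$ is cyclic of prime order, so $H^2(L,B)=\Hom(M(L),B)=0$, and the Lyndon-Hochschild-Serre spectral sequence collapses (because $|A_0/L|$ is coprime to $|B|$) to yield $H^2(A_0,B)=H^2(L,B)^{A_0/L}=0$. Thus the extension splits as $G=A\times B$ with $A\cong A_0$ almost simple, completing case~(2). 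The main obstacle is the structural analysis producing the unique simple component $L$ with $|P_L|=3$; the cohomological splitting at the end is then a formality.
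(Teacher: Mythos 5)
Your overall architecture is sound and close to the paper's: establish $P$ abelian and $|\Normalizer_G(P)/\Centralizer_G(P)|=2$ with $|[P,\Normalizer_G(P)]|=3$ via Burnside fusion and orbit counting (correct), split according to whether $\Layer(G)$ is trivial, use Toborg--Waldecker plus Schur multipliers of Suzuki groups to kill $\Center(L)$, invoke $3\nmid|\Out(L)|$, and split the central extension cohomologically. The opening computation and the normal-$P$ case are fine, and the endgame ($B=\Center(G)$, $H^2(A_0,B)=0$ via Lyndon--Hochschild--Serre) matches the paper's intent with more detail supplied.

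The gap is in the component analysis: you assert that $\Normalizer_{L_i}(P_{L_i})$ normalizes $P$, justified by saying its elements ``centralize the other components and $\Centralizer_P(\Layer(G))$.'' That justification presupposes $P=\bigl(\prod_i P_{L_i}\bigr)\Centralizer_P(\Layer(G))$, which is not known at that stage: elements of $P$ outside $\Layer(G)$ may induce outer automorphisms on the components or permute them, and $\Center(\Layer(G))$ has not yet been shown trivial, so $P$ need not split along the components at all. An element $t\in\Normalizer_{L_i}(P_{L_i})$ normalizes $P_{L_i}$ but has no a priori reason to normalize the full Sylow subgroup $P$; consequently the claimed injections $W_{L_i}\hookrightarrow W$, the uniqueness of the component, the choice of a representative $t\in L$ for $W$, and the triviality of $W_{\Centralizer_G(L)}$ are all unsupported as written. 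The step is repairable: since $P\le\Centralizer_G(P_{L_i})$ and $t$ normalizes $\Centralizer_G(P_{L_i})$, Sylow's theorem lets you replace $t$ by $tc^{-1}\in\Normalizer_G(P)$ with $c\in\Centralizer_G(P_{L_i})$, which induces the same automorphism of $P_{L_i}$; this recovers $\operatorname{Aut}_{L_i}(P_{L_i})\le\operatorname{Aut}_G(P_{L_i})$ and hence the counting you want. The paper avoids the issue entirely and more cheaply: by the Focal Subgroup Theorem (for abelian $P$) one has $P\cap G'=[P,\Normalizer_G(P)]$ of order $3$, and since $\Layer(G)$ is perfect it lies in $G'$, so $|P\cap\Layer(G)|\le 3$; combined with $O_{3'}(G)=1$ this forces $\Layer(G)$ to be trivial or a single component with Sylow $3$-subgroup of order $3$ in one stroke. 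You should adopt one of these fixes before the rest of your argument can stand.
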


Theorem \ref{main} is independent of the classification theorem of
finite simple groups, however Theorem~\ref{main2} depends on the
fact that if $S$ is a non-abelian finite simple group with a Sylow
$3$-subgroup of order $3$ then the size of the outer automorphism
group of $S$ is not divisible by $3$.

\section{Proof of Theorem \ref{main}}

The starting point of our investigations is the following result
which was communicated to one of us in 2001.

\begin{prop}[Robinson; \cite{Robinson}]
\label{Robinson} Let $\pi = \{ p_{1}, \ldots , p_{t} \}$ be a
subset of $\pi(G)$ for a finite group $G$. Then there exists a
$p_{i}$-subgroup $Q_{i}$ of $G$ for each $i$ with $1 \leq i \leq
t$ so that $k_{\pi}(G) \leq \prod_{i=1}^{t} k(Q_{i})$.
\end{prop}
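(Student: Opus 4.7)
My plan is to prove this by induction on $t = |\pi|$. For the base case $t = 1$, I take $Q_1$ to be a Sylow $p_1$-subgroup of $G$; by Sylow's theorem every $p_1$-element is $G$-conjugate into $Q_1$, so the inclusion $Q_1 \hookrightarrow G$ sends $Q_1$-conjugacy classes surjectively onto the $G$-conjugacy classes of $p_1$-elements, giving $k_{p_1}(G) \leq k(Q_1)$.

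For the inductive step, set $\pi' = \pi \setminus \{p_t\}$. The central ingredient I would establish is the fiber identity
\[
k_\pi(G) \;=\; \sum_{[y]} k_{\pi'}(C_G(y)),
\]
where $[y]$ ranges over the $G$-conjugacy classes of $p_t$-elements of $G$ and the representative $y$ is fixed in each class. To see this, use that every $\pi$-element factors uniquely as a commuting product $g = xy$ with $x$ a $\pi'$-element and $y$ a $p_t$-element. After fixing such a $y$, any $h \in G$ satisfying $(xy)^h = x'y$ must, by uniqueness of primary decomposition, conjugate the primary components separately, forcing $h \in C_G(y)$ and $x^h = x'$. Thus the $G$-classes in the fiber above $[y]$ correspond bijectively to the $C_G(y)$-conjugacy classes of $\pi'$-elements of $C_G(y)$.

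Next, let $Q_t$ be a Sylow $p_t$-subgroup of $G$; by the base-case argument the number of $G$-classes of $p_t$-elements is at most $k(Q_t)$. For each representative $y$, apply the inductive hypothesis to the smaller prime set $\pi'$ inside the group $C_G(y)$ to obtain $p_i$-subgroups $Q_i^{(y)} \leq C_G(y) \leq G$ (for $1 \leq i \leq t-1$) with $k_{\pi'}(C_G(y)) \leq \prod_{i=1}^{t-1} k(Q_i^{(y)})$. Choose an index $y^{\ast}$ maximising $\prod_{i=1}^{t-1} k(Q_i^{(y)})$ over the chosen representatives, and set $Q_i := Q_i^{(y^{\ast})}$ for $1 \leq i \leq t-1$. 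Then
\[
k_\pi(G) \;=\; \sum_{[y]} k_{\pi'}(C_G(y)) \;\leq\; k(Q_t)\prod_{i=1}^{t-1} k(Q_i) \;=\; \prod_{i=1}^{t} k(Q_i),
\]
completing the induction.

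I expect the only non-bookkeeping difficulty to be in verifying the fiber identity: one must know that the primary decomposition of a $\pi$-element is $G$-equivariant and unique, and one must observe that feeding the induction into the \emph{worst} fiber (rather than attempting to merge the inductive outputs for different $y$'s into a single tuple) is exactly what produces one set $(Q_1, \dots, Q_{t-1})$ uniform in $y$. Once that is in hand, the rest of the estimate is a sum of at most $k(Q_t)$ terms each bounded by $\prod_{i<t} k(Q_i)$.
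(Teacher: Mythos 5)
Your proof is correct and is essentially the argument the paper relies on: Robinson's result is only cited as a personal communication, but the paper reproduces the key step in the proof of Proposition~\ref{p1}, namely the unique commuting primary decomposition of a $\pi$-element, the resulting fiber identity $k_{\pi}(G)=\sum_{i} k_{p}(\Centralizer_{G}(x_{i}))$, and the trick of bounding the sum by the number of terms times the largest term. The only cosmetic difference is that you fiber over the $p_{t}$-part and recurse inside the centralizers $\Centralizer_{G}(y)$, whereas the paper fibers over the $\pi\setminus\{p\}$-part and peels off one prime at a time within $G$ itself; both give the same bound with the same choice of ``worst'' subgroups.
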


Hall's theorem about solvable groups extends to $\pi$-solvable
groups (assuming the Odd Order Theorem \cite{Feit-Thompson}).

\begin{prop}[Hall; Theorems 6.4.5 and 6.4.6 of \cite{KS}]
\label{Hall} A $\pi$-solvable group $G$ contains a Hall
$\pi$-subgroup, all Hall $\pi$-subgroups of $G$ are conjugate, and every $\pi$-subgroup in $G$ lies in some Hall $\pi$-subgroup.
\end{prop}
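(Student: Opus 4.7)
The plan is to prove all three assertions simultaneously by induction on $|G|$, using two standard ingredients: the structure of minimal normal subgroups of a $\pi$-solvable group, and the Schur--Zassenhaus theorem in both its existence and conjugacy forms. Let $N$ be a minimal normal subgroup of $G$. Since every $\pi$-composition factor of a $\pi$-solvable group has prime order, $N$ is either an elementary abelian $p$-group with $p \in \pi$, or a (possibly non-solvable) $\pi'$-group. I would split on these two cases.

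\textbf{Case 1: $N$ is a $\pi$-group.} By induction, $G/N$ has a Hall $\pi$-subgroup $K/N$; its pre-image $K$ is then a $\pi$-subgroup of order exactly $|G|_{\pi}$, hence a Hall $\pi$-subgroup of $G$. Any Hall $\pi$-subgroup $H$ of $G$ must contain $N$: otherwise $HN$ would be a $\pi$-subgroup of order strictly larger than $|G|_{\pi}$, which is impossible. Thus conjugacy of two Hall $\pi$-subgroups $H_{1},H_{2}$ reduces modulo $N$ to the conjugacy statement for $G/N$, which is supplied by induction. For the embedding assertion, a given $\pi$-subgroup $P \leq G$ satisfies that $PN/N$ is a $\pi$-subgroup of $G/N$, which lies in some Hall $\pi$-subgroup of $G/N$ by induction; pulling back gives a Hall $\pi$-subgroup of $G$ containing $P$.

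\textbf{Case 2: $N$ is a $\pi'$-group.} Induction applied to $G/N$ gives $K \leq G$ with $N \leq K$ and $K/N$ a Hall $\pi$-subgroup of $G/N$; then $N$ is a normal Hall $\pi'$-subgroup of $K$, and the existence portion of Schur--Zassenhaus produces a complement $H \leq K$ of order $|G|_{\pi}$, which is a Hall $\pi$-subgroup of $G$. For conjugacy, two Hall $\pi$-subgroups $H_{1},H_{2}$ meet $N$ trivially, so their images in $G/N$ are Hall $\pi$-subgroups and are conjugate by induction; after conjugating in $G$ one may assume $H_{1}N = H_{2}N$, and then $H_{1},H_{2}$ are two complements to $N$ in this common subgroup, conjugate by the conjugacy portion of Schur--Zassenhaus. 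The embedding assertion is handled analogously: push $P$ into $G/N$ to place it inside some Hall subgroup $K/N$, and then inside $K$ apply Schur--Zassenhaus to move $P$ (a complement to $N$ in $PN$) into a full complement to $N$ in $K$.

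The main technical hurdle is the conjugacy statement in Case 2. The classical Schur--Zassenhaus theorem requires the solvability of $N$ or of $K/N$ in order to conjugate two complements; since $|N|$ and $|K/N|$ are coprime, one of them has odd order, and invoking the Odd Order Theorem of Feit--Thompson supplies the needed solvability. This is precisely the step where Hall's theorem for $\pi$-solvable groups genuinely depends on the Odd Order Theorem, as the excerpt flags.
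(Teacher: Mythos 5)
Your argument is correct. The paper does not prove this proposition at all --- it is quoted as a known result from Kurzweil--Stellmacher \cite{KS} --- and what you have written is essentially the standard proof given there: induction on $|G|$ through a minimal normal subgroup, which by $\pi$-solvability is either a normal $\pi$-subgroup (handled by lifting from the quotient) or a $\pi'$-group (handled by Schur--Zassenhaus), with the Odd Order Theorem invoked exactly where the paper flags it, namely to get conjugacy of complements. The only step you leave compressed is the embedding claim in Case~2; the standard completion is to take a complement $H$ of $N$ in $K$, note that $H \cap PN$ is again a complement of $N$ in $PN$, and conjugate $P$ onto it inside $PN$, which places $P$ in a conjugate of $H$.
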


The proof of Proposition~\ref{Robinson} can be used to establish the
following claims.

\begin{prop}
\label{p1} Let $G$ be a finite group and let $\mu \subseteq \pi$
be two non-empty sets of primes.

\begin{enumerate}
\item[\textup{(1)}] Then $d(G) \leq d_{\pi}(G) \leq d_{\mu}(G)\leq
1$. Moreover if $\pi$ is the disjoint union $\mu \cup \{ p \}$
then $k_{\pi}(G) \leq k_{\mu}(G) k_{p}(N)$ for some subgroup $N$
of $G$.

\item[\textup{(2)}] Suppose that $\pi$ is the disjoint union $\mu
\cup \{ p \}$ and that $G$ is $\mu$-solvable with $d_{\mu}(G) =
1$. Then $d_{\pi}(G) = (1/|H|) \sum_{h \in H}
k_{p}(\Centralizer_{G}(h))/{|G|}_{p}$ where $H$ is an abelian Hall
$\mu$-subgroup of $G$.

\item[\textup{(3)}] If $G$ contains an abelian Hall $\pi$-subgroup
then $\prod_{p \in \pi} d_{p}(G) \leq d_{\pi}(G)$.
\end{enumerate}
\end{prop}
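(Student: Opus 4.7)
My plan is to base all three parts on the identity
\[
k_{\pi}(G) \;=\; \sum_{\bar x} k_p\bigl(\Centralizer_{G}(x)\bigr),
\]
valid whenever $\pi$ is the disjoint union $\mu \cup \{p\}$, where the sum runs over $G$-classes $\bar x$ of $\mu$-elements with a fixed representative $x$. I would derive this by using the unique factorization $g = g_{\mu} g_p$ of any $\pi$-element into commuting factors of $\mu$-order and $p$-power order: after $G$-conjugation one may assume $g_{\mu}$ equals the chosen representative $x$, whereupon $g_p \in \Centralizer_{G}(x)$, and two such elements are $G$-conjugate iff they are $\Centralizer_{G}(x)$-conjugate (any conjugator must centralize $x$). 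This furnishes a bijection between $G$-classes of $\pi$-elements lying over $\bar x$ and $\Centralizer_{G}(x)$-classes of $p$-elements of $\Centralizer_{G}(x)$.

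For part~(1), bounding the sum by $k_{\mu}(G) \cdot \max_{\bar x} k_p(\Centralizer_{G}(x))$ and letting $N$ be the centralizer realizing the maximum gives the claimed inequality $k_{\pi}(G) \leq k_{\mu}(G) k_p(N)$. Dividing by $|G|_{\pi} = |G|_{\mu}|G|_p$ and using $k_p(N) \leq |N|_p \leq |G|_p$ produces $d_{\pi}(G) \leq d_{\mu}(G)$ in the one-new-prime case; iterating one prime at a time extends this to any $\mu \subseteq \pi$. The bound $d_{\mu}(G) \leq 1$ reduces by the same iteration to the base case $d_p(G) \leq 1$, immediate from Sylow since $k_p(G) \leq k(P) \leq |P|$ for $P$ a Sylow $p$-subgroup. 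Finally $d(G) \leq d_{\pi}(G)$ is the same iteration, run from $\pi$ up to $\pi(G)$, using $d_{\pi(G)}(G) = d(G)$.

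For part~(2), Proposition~\ref{Hall} applied to the $\mu$-solvable group $G$ supplies a Hall $\mu$-subgroup $H$ into which every $\mu$-element conjugates, so $k_{\mu}(G) \leq |H| = |G|_{\mu}$. The hypothesis $d_{\mu}(G) = 1$ forces equality, meaning each $G$-class of $\mu$-elements meets $H$ in exactly one element; in particular no two distinct elements of $H$ are even $H$-conjugate, so $H$ is abelian. The identity from the first paragraph then reads $k_{\pi}(G) = \sum_{h \in H} k_p(\Centralizer_{G}(h))$, and dividing by $|G|_{\pi} = |H|\cdot|G|_p$ yields the claimed formula.

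For part~(3), enumerate $\pi = \{p_1, \ldots, p_t\}$ and consider the map $[g] \mapsto ([g_{p_1}], \ldots, [g_{p_t}])$ from $G$-classes of $\pi$-elements to the Cartesian product of the sets of $G$-classes of $p_i$-elements; this is well-defined because conjugation commutes with taking $p_i$-parts. The crucial point is surjectivity: given a tuple $([x_i])_i$, Sylow allows me to conjugate each $x_i$ into the Sylow $p_i$-subgroup of the abelian Hall $\pi$-subgroup $H$, producing commuting elements $y_i \in H$ of pairwise coprime orders whose product $y_1 \cdots y_t$ is a $\pi$-element with $p_i$-part $y_i$ conjugate to $x_i$. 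Hence $k_{\pi}(G) \geq \prod_i k_{p_i}(G)$, and dividing by $|G|_{\pi} = \prod_i |G|_{p_i}$ gives part~(3). The only nontrivial step throughout is the bijection underlying the key identity; once it is in place, parts~(2) and~(3) are quick corollaries obtained by bookkeeping on orders.
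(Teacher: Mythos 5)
Your proposal is correct and follows essentially the same route as the paper: the central identity $k_{\pi}(G)=\sum_{i}k_{p}(\Centralizer_{G}(x_{i}))$ obtained from the commuting $\mu$-part/$p$-part factorization is exactly the paper's argument for part~(1), part~(2) is the same specialization of that identity to representatives forming a Hall $\mu$-subgroup, and your surjectivity argument in part~(3) is the paper's construction of the pairwise non-conjugate products $\prod_{p}x_{p,i_{p}}$ viewed from the other direction. No gaps; the extra details you supply (why the conjugator centralizes $x$, why $H$ is abelian when $k_{\mu}(G)=|H|$) are correct elaborations of steps the paper leaves implicit.
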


\begin{proof}
Assume that $\pi$ is the disjoint union of $\mu$ and $\{ p \}$.
Put $k= k_{\mu}(G)$ and let $x_{1}, \ldots , x_{k}$ be
representatives of the $G$-conjugacy classes of $\mu$-elements of
$G$. For each $1 \leq i \leq k$ let $y_{i,1}, \ldots , y_{i,m(i)}$
be representatives of the $m(i) = k_{p}(\Centralizer_{G}(x_{i}))$
conjugacy classes of $p$-elements inside
$\Centralizer_{G}(x_{i})$.

We claim that any $\pi$-element $z$ of $G$ is conjugate to
$x_{i}y_{i,j}$ for some $i$ and $j$. Write $z = xy$ where $x$ is
the $\mu$-part of $z$ and $y$ is the $p$-part of $z$. By
conjugating by a suitable element of $G$ if necessary, we may
assume that $x = x_i$ for some $i$. But then $y$ lies inside
$\Centralizer_{G}(x_{i})$ and therefore is conjugate in
$\Centralizer_{G}(x_{i})$ to some $y_{i,j}$. This proves the
claim. It is also clear that the elements $x_{i}y_{i,j}$ are
pairwise non-conjugate. Thus \[k_{\pi}(G) =
\sum_{i=1}^{k_{\mu}(G)} k_{p}(\Centralizer_{G}(x_{i})).\]

Let $N$ be a subgroup of $G$ satisfying $k_{p}(N) = \max_{1 \leq i
\leq k} k_{p}(\Centralizer_{G}(x_{i}))$. Then $k_{\pi}(G) \leq
k_{\mu}(G) k_{p}(N)$ which gives the second statement of part~(1).
The first statement of part~(1) readily follows.

Suppose now that $G$ is $\mu$-solvable and that $d_{\mu}(G)=1$.
Then, by Proposition~\ref{Hall}, $\{ x_{1}, \ldots , x_{k} \}$ can
be taken to be a Hall $\mu$-subgroup $H$ of $G$. Thus $k_{\pi}(G) =
\sum_{h \in H} k_{p}(\Centralizer_{G}(h))$. After dividing both
sides of this equality by ${|G|}_{\pi}$ we obtain part~(2).

Finally suppose that $G$ contains an abelian Hall $\pi$-subgroup
$H = \prod_{p \in \pi} H_{p}$ where $H_{p}$ is a Sylow
$p$-subgroup of $G$. For $p \in \pi$ let $x_{p,1}, \ldots ,
x_{p,k_{p}(G)}$ be representatives in $H_{p}$ of the $G$-conjugacy
classes of $p$-elements in $G$. It is easy to see that the
$\pi$-elements $\prod_{p \in \pi} x_{p,i_{p}}$ and $\prod_{p \in
\pi} x_{p,j_{p}}$ are conjugate in $G$ if and only if $i_{p} =
j_{p}$ for all $p \in \pi$. This gives $\prod_{p \in \pi} k_{p}(G)
\leq k_{\pi}(G)$, from which part~(3) readily follows.
\end{proof}

Another key tool in our proof of Theorem~\ref{main} is the
following.

\begin{prop}[Burnside's $p$-complement theorem; Theorem 7.2.1 of \cite{KS}]
\label{Burnside} If a finite group $G$ contains a Sylow
$p$-subgroup $P$ with $\Centralizer_{G}(P) = \Normalizer_{G}(P)$, then $G$ contains a normal $p$-complement.
\end{prop}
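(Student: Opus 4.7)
The plan is to apply the transfer homomorphism to produce a surjection $G \twoheadrightarrow P$ whose kernel is the desired normal $p$-complement. The argument splits into three ingredients: first show that $P$ is abelian, then show that $\Normalizer_G(P)$ controls $G$-fusion in $P$, and finally evaluate the transfer. The first ingredient is immediate: since $P \leq \Normalizer_G(P) = \Centralizer_G(P)$, every element of $P$ commutes with every element of $P$, so $P$ is abelian.

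The key step is the fusion control. I claim that if $x, y \in P$ are conjugate in $G$, then they are $\Normalizer_G(P)$-conjugate; combined with $\Normalizer_G(P) = \Centralizer_G(P)$, this forces $x = y$. To see this, write $y = x^g$ and observe that $P$ centralizes $y$ (since $P$ is abelian and $y \in P$), while $P^g$ centralizes $x^g = y$ as well. Thus $P$ and $P^g$ are both Sylow $p$-subgroups of $\Centralizer_G(y)$, so by Sylow's theorem there exists $c \in \Centralizer_G(y)$ with $P^{gc} = P$; the element $gc$ then lies in $\Normalizer_G(P)$ and satisfies $x^{gc} = y^c = y$, as required.

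With fusion trivial on $P$, the plan is to invoke the transfer homomorphism $V : G \to P$, which is well defined because $P$ is abelian. The standard evaluation of $V$ on an element $x \in P$ writes $V(x)$ as a product of $G$-conjugates of powers of $x$ that happen to land inside $P$, and by fusion control each such conjugate equals $x$; the exponents sum to $[G:P]$, giving $V(x) = x^{[G:P]}$. Since $\gcd([G:P], |P|) = 1$, the map $x \mapsto x^{[G:P]}$ is a bijection of $P$, so $V$ is surjective. Its kernel is then a normal subgroup of $G$ of index $|P|$, which is the desired normal $p$-complement. The main obstacle is the fusion step above, which crucially uses the abelianness of $P$ to place $P$ inside $\Centralizer_G(y)$; once fusion is controlled, the transfer evaluation is routine.
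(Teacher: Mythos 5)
Your proof is correct: the paper itself gives no argument for this statement (it is quoted as Theorem 7.2.1 of Kurzweil--Stellmacher), and your argument --- abelianness of $P$ from $P\leq \Normalizer_G(P)=\Centralizer_G(P)$, Burnside's fusion lemma via Sylow's theorem in $\Centralizer_G(y)$, and evaluation of the transfer as $V(x)=x^{[G:P]}$ --- is exactly the standard transfer proof found in that reference.
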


Although the proof of the next proposition does not require the full
strength of Proposition~\ref{Burnside} (as kindly pointed out to us
by G.\,R.~Robinson), we will give a proof using it (since we will
need Proposition~\ref{Burnside} later anyway).

\begin{prop}
\label{p3} Let $G$ be a finite group and $\pi$ a non-empty set of
primes. Then we have the following.
\begin{enumerate}
\item[\textup{(1)}] If $d_{p}(G) = 1$ for every $p \in \pi$ then
$G$ contains a normal $\pi$-complement and $G$ is $\pi$-solvable.

\item[\textup{(2)}] $d_{\pi}(G) = 1$ if and only if $G$ has a normal $\pi$-complement and an abelian Hall $\pi$-subgroup.
\end{enumerate}
\end{prop}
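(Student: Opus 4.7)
For part (1), I would first observe that $d_p(G) = 1$ forces any Sylow $p$-subgroup $P$ of $G$ to be abelian with $\Normalizer_G(P) = \Centralizer_G(P)$. Indeed, every $p$-element of $G$ is $G$-conjugate into $P$, so $k_p(G) \le k(P) \le |P|$; the equality $d_p(G) = 1$ then forces both $k(P) = |P|$ (so $P$ is abelian) and that distinct elements of $P$ are never $G$-conjugate, whence conjugation by any $n \in \Normalizer_G(P)$ fixes $P$ pointwise and $n \in \Centralizer_G(P)$. Burnside's theorem (Proposition \ref{Burnside}) then supplies a normal $p$-complement $K_p \lhd G$ for each $p \in \pi$. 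Setting $K := \bigcap_{p \in \pi} K_p$, I check that $K$ is a $\pi'$-group (since $K \le K_p$ forces $p \nmid |K|$) and that $G/K$ embeds into $\prod_{p \in \pi} G/K_p$, a direct product of $p$-groups over distinct primes, which is nilpotent and in particular a $\pi$-group. Thus $K$ is a normal $\pi$-complement and $G$ is $\pi$-solvable.

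For the forward direction of (2), Proposition \ref{p1}(1) gives $d_p(G) = 1$ for every $p \in \pi$, so part (1) applies; $G$ is $\pi$-solvable, and Proposition \ref{Hall} produces a Hall $\pi$-subgroup $H$. Every $\pi$-element of $G$ is $G$-conjugate to some element of $H$, so $k_\pi(G) \le |H|$; the equality $d_\pi(G) = 1$ forces distinct elements of $H$ to represent distinct $G$-conjugacy classes, a fortiori distinct $H$-classes, so $H$ is abelian.

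For the converse, suppose $G$ has a normal $\pi$-complement $K$ and an abelian Hall $\pi$-subgroup $H$. Then $G = K \rtimes H$, and $G$ is $\pi$-solvable, so every $\pi$-element of $G$ is $G$-conjugate to some $h \in H$. To conclude $k_\pi(G) = |H|$, it suffices to rule out $G$-fusion among elements of $H$: if $h, h' \in H$ and $g h g^{-1} = h'$, writing $g = k h_0$ with $k \in K$, $h_0 \in H$ and using that $H$ is abelian reduces this to $k h k^{-1} = h'$; then $h' h^{-1} = k h k^{-1} h^{-1}$ lies both in $K$ (by normality) and in $H$, so in $K \cap H = 1$, giving $h = h'$. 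Hence $d_\pi(G) = 1$. I expect the most delicate point to be precisely this fusion control: in both directions one must be sure that all $G$-conjugacy relations on $H$ are correctly tracked, which in the converse hinges on the semidirect-product commutator computation and in the forward direction relies on $\pi$-solvability to funnel every $\pi$-class into $H$.
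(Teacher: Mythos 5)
Your proof is correct and follows essentially the same route as the paper: Burnside's normal $p$-complement theorem (Proposition~\ref{Burnside}) for each $p \in \pi$ yields part~(1), and part~(2) is deduced from part~(1) of Proposition~\ref{p1}, part~(1), and Hall's theorem (Proposition~\ref{Hall}). You have merely filled in the details the paper leaves implicit, namely the derivation of $\Normalizer_G(P)=\Centralizer_G(P)$ from $d_p(G)=1$, the passage from the individual $p$-complements to a $\pi$-complement, and the fusion control on the abelian Hall $\pi$-subgroup in the converse direction.
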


\begin{proof}
We claim that if $d_{p}(G) = 1$ for $p \in \pi$ then $G$ contains
a normal $p$-complement. Notice that once this claim is
established, part~(1) follows just by applying this fact
repeatedly for the primes in $\pi \cap \pi(G)$. But this claim
follows directly from Proposition \ref{Burnside}.

The `if' direction of part~(2) is clear by Proposition \ref{Hall},
while the `only if' direction of part~(2) follows from part~(1) of
Proposition~\ref{p1}, part~(1), and Proposition \ref{Hall}.
\end{proof}

As we have already described groups $G$ with $d_{\pi}(G) = 1$, we
now consider groups $G$ with $d_{\pi}(G) < 1$. In doing so our
first tool is a result we mentioned before.

\begin{prop}[Gustafson; \cite{Gustafson}]
\label{G} Let $G$ be a finite group with $d(G) > 5/8$. Then $G$ is
abelian.
\end{prop}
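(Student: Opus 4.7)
The plan is to prove the contrapositive: if $G$ is nonabelian, then $d(G) \leq 5/8$. The starting point is the standard identity
\[
d(G) = \frac{k(G)}{|G|} = \frac{1}{|G|^2}\sum_{g \in G}|\Centralizer_G(g)|,
\]
obtained from Burnside's orbit-counting lemma applied to $G$ acting on itself by conjugation: the fixed points of $g$ are exactly $\Centralizer_G(g)$ and the orbits are the conjugacy classes.

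Next I would split the sum according to whether $g$ lies in the center $Z := \Center(G)$. Central elements contribute $|Z|\cdot|G|$ to the sum. For every non-central $g$ the centralizer $\Centralizer_G(g)$ is a proper subgroup of $G$, so $|\Centralizer_G(g)| \leq |G|/2$. Combining the two contributions yields
\[
d(G) \leq \frac{1}{|G|^2}\Big(|Z|\cdot|G| + (|G|-|Z|)\cdot\tfrac{|G|}{2}\Big) = \frac{1}{2} + \frac{|Z|}{2|G|}.
\]

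The final ingredient is the classical fact that $G/Z$ cannot be cyclic (otherwise $G$ itself would be abelian), so $[G:Z] \geq 4$ and hence $|Z|/|G| \leq 1/4$. Substituting gives $d(G) \leq 1/2 + 1/8 = 5/8$, completing the proof. There is essentially no obstacle here: the argument is a short, elementary computation whose only nontrivial input is the implication ``$G/\Center(G)$ cyclic $\Rightarrow$ $G$ abelian.'' The bound is sharp, as witnessed by $D_8$, where the center has index $4$ and every non-central element has centralizer of order $|D_8|/2$, yielding $d(D_8)=5/8$.
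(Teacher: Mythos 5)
Your proof is correct and complete; the paper itself does not reprove this result but simply cites Gustafson's Monthly article, and your argument (Burnside's lemma, splitting the centralizer sum over central versus non-central elements, and the fact that $G/\Center(G)$ cyclic forces $G$ abelian so that $[G:\Center(G)]\geq 4$) is precisely the standard proof given there. The sharpness remark via $D_{8}$ also matches the paper's own observation that the bound $5/8$ is attained.
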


An important application of Proposition \ref{G} is the following.

\begin{prop}
\label{p2} Let $G$ be a finite group and $\pi$ a set of primes
such that $d_{\pi}(G) < 1$. Then we have $d_{\pi}(G) \leq 2/3$.
Furthermore if $3 \not\in \pi$ or if $|G|$ is odd, then $d_{\pi}(G) \leq 5/8$.
\end{prop}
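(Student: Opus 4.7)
The plan is to reduce the proposition to a single prime via the auxiliary statement that \emph{$d_p(G) = 1$ for every $p \in \pi$ already implies $d_\pi(G) = 1$}. Granting this, the hypothesis $d_\pi(G) < 1$ produces some $p \in \pi$ with $d_p(G) < 1$, and Proposition~\ref{p1}(1) gives $d_\pi(G) \leq d_p(G)$; so the whole statement follows from bounding $d_p(G)$ for a single prime.

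To prove the auxiliary statement, Proposition~\ref{p3}(1) provides a normal $p$-complement $M_p = O_{p'}(G)$ for each $p \in \pi$, and $d_p(G) \leq d(P_p)$ forces each Sylow $p$-subgroup $P_p$ to be abelian. Set $K := \bigcap_{p \in \pi} M_p$, a normal $\pi'$-subgroup. The canonical map $G/K \hookrightarrow \prod_{p \in \pi} G/M_p \cong \prod_{p \in \pi} P_p$ is an injection into an abelian $\pi$-group of order $|G|_\pi$, while the surjections $G/K \twoheadrightarrow G/M_p \cong P_p$ make $|G/K|$ divisible by every $|G|_p$ and hence by $|G|_\pi$. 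The map is therefore an isomorphism, $K$ is a normal $\pi$-complement, and the corresponding Hall $\pi$-subgroup is abelian; Proposition~\ref{p3}(2) then gives $d_\pi(G) = 1$.

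For the single-prime bound, fix $p$ with $d_p(G) < 1$ and let $P$ be a Sylow $p$-subgroup of $G$. If $P$ is non-abelian, Proposition~\ref{G} yields $d_p(G) \leq d(P) \leq 5/8$. Otherwise $P$ is abelian, a standard fusion argument identifies $k_p(G)$ with the number of orbits of $A := N_G(P)/C_G(P)$ on $P$, and $A$ is a $p'$-group since every $p$-element of $N_G(P)$ lies in the (normal, unique) Sylow $P \subseteq C_G(P)$. Using $|C_P(a)| \leq |P|/p$ for $a \neq 1$ in the Burnside orbit-counting formula gives
\[
d_p(G) \leq \frac{1}{|A|}\left(1 + \frac{|A|-1}{p}\right),
\]
and $d_p(G) < 1$ forces $|A| \geq 2$ with $\gcd(|A|, p) = 1$. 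A short case check on the admissible pairs $(p, |A|)$ confirms that the right-hand side is at most $2/3$, with equality attainable only at $(p, |A|) = (3, 2)$ or $(2, 3)$.

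The $5/8$ refinement eliminates these two borderline pairs. The hypothesis $3 \notin \pi$ excludes $(3, 2)$ since then $p \neq 3$; and if $|G|$ is odd, then $|A|$ must also be odd, so $|A| \geq 5$ whenever $p = 3$, yielding $d_3(G) \leq 7/15 < 5/8$. For $(p, |A|) = (2, 3)$ a sharper centralizer bound $|C_P(a)| \leq |P|/4$ holds: the coprime action of the order-$3$ element $a$ on the abelian $2$-group $P$ partitions $[P, a] \setminus \{1\}$ into $\langle a \rangle$-orbits of size $3$, so $|[P, a]|$ is a power of $2$ congruent to $1 \pmod 3$ and hence at least $4$. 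This gives $d_p(G) \leq 1/2$. I expect this endgame at the two borderline pairs to be the main subtlety; everything else falls out of the orbit-counting bound and the single-prime reduction.
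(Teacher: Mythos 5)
Your proof is correct and follows essentially the same route as the paper: reduce to a single prime via the observation that $d_p(G)=1$ for all $p\in\pi$ forces $d_\pi(G)=1$, pass by Burnside's fusion lemma to the coprime action of $A=N_G(P)/C_G(P)$ on the abelian Sylow subgroup $P$, bound the number of orbits, and settle the borderline case $p=2$, $|A|=3$ by the coprime-action fact that forces $|[P,a]|\geq 4$ (equivalently, $|C_P(a)|\leq |P|/4$), which is the same fact the paper extracts from its $\alpha\mapsto\alpha\beta$ orbit-length argument. The only difference is bookkeeping: the paper counts the common fixed points $r=|C_P(X)|$, a divisor of $|P|$, and uses that nontrivial orbits have length at least $2$, whereas you run the Cauchy--Frobenius average with the per-element bound $|C_P(a)|\leq |P|/p$; both yield the same case analysis and the same conclusions.
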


\begin{proof}
Assume that $G$ is a finite group with $d_{\pi}(G)
> 5/8$.

Assume first that $d_{p}(G) = 1$ for every $p \in \pi$. Applying
Proposition~\ref{p3}, we have that $G$ is $\pi$-solvable and that
every Hall $\pi$-subgroup of $G$ is abelian by Propositions
\ref{G} and \ref{Hall}. But then part (3) of Proposition \ref{p1}
gives $d_{\pi}(G) = 1$.

Hence we may assume that $d_{p}(G) < 1$ for some $p \in \pi$, and
thus it is sufficient to prove the proposition in the special case
when $\pi = \{ p \}$. Let $P$ be a Sylow $p$-subgroup of $G$. This
must be abelian. Consider the action of the $p'$-group $X =
\Normalizer_{G}(P)/\Centralizer_{G}(P)$ on $P$. Suppose that $X$ has $r$ fixed
points on $P$. Then we must have $r
> |P|/4$.

If $p \geq 5$ then this can only be if $r = |P|$ and thus
$d_{p}(G) = 1$ by Proposition \ref{Burnside}. A contradiction.

If $p=2$ then the only case to consider is when $r = |P|/2$, since
otherwise we may apply Proposition \ref{Burnside} as before.
Suppose that $Q$ is the subgroup of $P$ all of whose elements are
fixed by $X$. We may assume that there exists an element $x$ of
$X$ which acts non-trivially on $P$. Let $\alpha \in P$ be an
element of a non-trivial $\langle x \rangle$-orbit. Then $\alpha$
is mapped to $\alpha \beta$ by $x$ for some $\beta$ in $Q$. This
means that $\alpha$ lies in an $\langle x \rangle$-orbit of length
a non-trivial power of $2$. A contradiction.

Finally let $p=3$. If $d_{3}(G) > 2/3$ then $r > |P|/3$ and this
forces $d_{3}(G) = 1$ by Proposition~\ref{Burnside}. A
contradiction. On the other hand, if $|G|$ is odd then so is $|X|$.
Thus the smallest prime divisor of $|X|$ is at least $5$ and so
$d_{\pi}(G) \leq d_{3}(G) \leq 7/15$.
\end{proof}

We will also need a useful lemma.

\begin{prop}[Fulman and Guralnick; Lemma 2.3 of \cite{FulmanGuralnick}]
\label{FG} Let $G$ be a finite group and $\pi$ a set of primes.
Then $d_{\pi}(G) \leq d_{\pi}(N) d_{\pi}(G/N)$ for any normal
subgroup $N$ of $G$.
\end{prop}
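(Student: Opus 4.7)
The plan is as follows. Since $|G|_{\pi} = |N|_{\pi}\cdot|G/N|_{\pi}$, the desired inequality is equivalent to the class-count version
\[
k_{\pi}(G)\le k_{\pi}(N)\cdot k_{\pi}(G/N),
\]
and I would prove this by counting $G$-classes of $\pi$-elements fiberwise along the projection $G\to G/N$, in the spirit of the proof of Proposition~\ref{p1}.

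First, the image $gN$ of any $\pi$-element $g\in G$ is a $\pi$-element of $G/N$; conversely every $\pi$-element $\bar x\in G/N$ lifts to a $\pi$-element $x\in G$, because the $\pi$-part of any preimage already lifts $\bar x$ (its $\pi'$-part must lie in $N$). Let $\bar x_1,\ldots,\bar x_m$, with $m=k_{\pi}(G/N)$, represent the $G/N$-classes of $\pi$-elements of $G/N$, fix $\pi$-lifts $x_i\in G$, and let $H_i\le G$ denote the full preimage of $\Centralizer_{G/N}(\bar x_i)$. Every $G$-class of $\pi$-elements of $G$ projects to some $[\bar x_i]$, and after $G$-conjugation has a representative in $x_iN$. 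A short check shows that $H_i$ is exactly the setwise stabilizer of $x_iN$ under $G$-conjugation, so two $\pi$-elements of $x_iN$ are $G$-conjugate if and only if they are $H_i$-conjugate. Writing $G^{\pi}$ for the set of $\pi$-elements of $G$, this gives
\[
k_{\pi}(G)=\sum_{i=1}^{m}\bigl|H_i\backslash(x_iN\cap G^{\pi})\bigr|.
\]

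Since $N\trianglelefteq H_i$, every $H_i$-orbit is a union of $N$-orbits, so $\bigl|H_i\backslash(x_iN\cap G^{\pi})\bigr|\le\bigl|N\backslash(x_iN\cap G^{\pi})\bigr|$. The decisive remaining ingredient is then the fiber bound
\[
\bigl|N\backslash(x_iN\cap G^{\pi})\bigr|\le k_{\pi}(N),
\]
after which summing over $i$ yields $k_{\pi}(G)\le k_{\pi}(G/N)k_{\pi}(N)$, as required.

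I expect this fiber bound to be the main obstacle. A naive Burnside translation,
\[
\sum_{n\in N}|x_iN\cap\Centralizer_G(n)\cap G^{\pi}|\le\sum_{n\in N}|\Centralizer_N(n)\cap G^{\pi}|,
\]
cannot be established term-by-term: already with $G=S_3$, $N=A_3$, $\pi=\{2\}$, $x=(1\,2)$, the $n=1$ summand contributes $3$ on the left but only $1$ on the right. A workable argument must therefore exploit global compensation between the different $n$, ideally by producing an explicit injection from $N$-orbits on $x_iN\cap G^{\pi}$ into $N$-classes of $\pi$-elements of $N$; this is subtler than the coset-counting that proves the unrestricted inequality $k(G)\le k(N)k(G/N)$, where the elementary bound $|xN\cap\Centralizer_G(n)|\le|\Centralizer_N(n)|$ already does the per-$n$ job. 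I would first attempt the injection approach, since in small examples the fiber bound always appears to hold with equality.
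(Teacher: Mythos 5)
You have correctly reduced the proposition to the class-count inequality $k_{\pi}(G)\le k_{\pi}(N)k_{\pi}(G/N)$, and your fiberwise bookkeeping is sound: every $\pi$-element of $G/N$ does lift to a $\pi$-element of $G$, the preimage $H_i$ of $\Centralizer_{G/N}(\bar x_i)$ is the setwise stabilizer of the coset, the displayed formula $k_{\pi}(G)=\sum_i|H_i\backslash(x_iN\cap G^{\pi})|$ is correct, and passing from $H_i$-orbits to $N$-orbits only weakens the count. But the argument then stops exactly where the content of the lemma begins. The fiber bound
\[
\bigl|N\backslash(x_iN\cap G^{\pi})\bigr|\le k_{\pi}(N)
\]
is not proved; you explicitly label it ``the main obstacle,'' show (correctly, via $S_3$) that the classical term-by-term Burnside comparison which proves $k(G)\le k(N)k(G/N)$ does not transfer, and then say you ``would attempt'' an injection from $N$-orbits on the fiber to $N$-classes of $\pi$-elements of $N$ without constructing one. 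That injection (or any substitute for it) is the theorem: without it the proposal establishes nothing beyond the trivial reduction. Note that your own analysis shows the difficulty is not cosmetic --- the failure of the pointwise inequality means any proof must use a genuinely global cancellation or a structural input (e.g.\ the commuting decomposition of elements of $x_iN$ into $\pi$- and $\pi'$-parts, Frobenius-type divisibility for the number of $\pi'$-elements of a centralizer, or a reduction to the case where $G/N$ is generated by $\bar x_i$), none of which is attempted.

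For comparison: the paper itself offers no proof of this statement --- it is quoted verbatim as Lemma 2.3 of Fulman and Guralnick \cite{FulmanGuralnick} --- so there is no internal argument to measure your sketch against; the appropriate completion is to supply the missing fiber bound or to cite it. As it stands, the proposal is an accurate road map with an honest hole at the decisive step, and should not be counted as a proof.
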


Now we can turn to the proof of Theorem~\ref{main}. Let $G$ be a
finite group and $\pi$ a set of primes with $d_{\pi}(G) > 5/8$.

We first claim that $G$ contains an abelian Hall $\pi$-subgroup.

By Sylow's theorem and Proposition~\ref{G} we may assume that $|\pi|
\geq 2$. Also, by Proposition~\ref{Hall} and Proposition \ref{G},
there is nothing to show when $G$ is a $\pi$-solvable group. So
assume that $S$ is any non-abelian simple composition factor of $G$
which is not $\pi$-solvable. Then $d_{\pi}(G) \leq d_{\pi}(S) < 1$
by Proposition~\ref{FG} and by part~(2) of Proposition~\ref{p3}. By
Proposition~\ref{p2} we may assume that $\pi(S) \cap \pi = \{ 3 \}$
for any such $S$ (for if $\pi(S) \cap \pi$ contains a prime $p$
different from $3$ then $d_{\pi}(G) \leq d_{p}(G) \leq d_{p}(S) \leq
5/8$ by Proposition \ref{p2}). Furthermore if $|S|$ is odd then
$d_{\pi}(S) \leq d_{3}(S) \leq 5/8$ again by Proposition~\ref{p2}.
So $|S|$ must be even and thus we may assume that the set $\mu = \pi
\setminus \{ 3 \}$ consists of primes at least $5$.

Clearly, the group $G$ is a $\mu$-solvable group and so by
Proposition~\ref{Hall} there exists a Hall $\mu$-subgroup in $G$
intersecting every conjugacy class of $\mu$-elements in $G$.
Moreover, as $d_\mu(G)\geq d_\pi(G)>5/8$ by Proposition~\ref{p1}, we
must have $d_{\mu}(G) = 1$ by Proposition~\ref{p2}. Let $H_1$ be a
Hall $\mu$-subgroup of $G$. Then $d_{\pi}(G) = (1/|H_1|) \sum_{h \in
H_1} k_{3}(\Centralizer_{G}(h))/{|G|}_{3}$ by part~(2) of
Proposition~\ref{p1}.

Given $h \in H_1$. Then $k_{3}(\Centralizer_{G}(h))/{|G|}_{3} \leq
1/3$, or $\Centralizer_{G}(h)$ contains a Sylow $3$-subgroup of
$G$, in which case $k_{3}(\Centralizer_{G}(h))/{|G|}_{3} \leq 1$.
We claim that $\Centralizer_{G}(h)$ contains a Sylow $3$-subgroup
of $G$ for more than $|H_1|/4$ of the $h$'s. For otherwise we
would have $$5/8 < d_{\pi}(G) \leq (3/4)(1/3) + (1/4)(1) = 1/2$$
which is a contradiction.

This means that a Sylow $3$-subgroup $P$ of $G$ centralizes more
than $|H_1|/4$ $\mu$-elements in $G$ (from different $G$-orbits).
But $\Centralizer_{G}(P)$ is also a $\mu$-solvable group and so
contains a Hall $\mu$-subgroup $K$. Then we have $|H_1|/4 <
k_{\mu}(\Centralizer_{G}(P)) \leq |K|$ which forces $|K| = |H_1|$
since $\mu$ consists of primes at least $5$. Thus $H:=K \times P$
is an abelian Hall $\pi$-subgroup in $\Centralizer_{G}(P)$ and
also in $G$. This finishes the proof of our claim.

Next we claim that $d_{\pi}(G) = 2/3$ or $d_{\pi}(G) = 1$.

By parts (1) and (3) of Proposition~\ref{p1} we have $\prod_{p \in
\pi} d_{p}(G) \leq d_{\pi}(G) \leq d_{q}(G)$ for any prime $q$ in
$\pi$. By Proposition \ref{p2}, we may take $q=3$ and we also see
that $d_{p}(G) = 1$ for any prime $p \not= 3$ in $\pi$. This forces
$d_{\pi}(G) = d_{3}(G)$. By Burnside's Lemma (see
\cite[Theorem~7.1.5]{KS}), we know that $d_{3}(G) =
d_{3}(\Normalizer_{G}(P))$ which must be $2/3$ or $1$ (if larger
than $5/8$).

Note that the above arguments show that if $d_{\pi}(G) = 2/3$ then
$3 \in \pi$, $d_{3}(G) = 2/3$, and also that $d_{\mu}(G) = 1$ where
$\mu = \pi \setminus \{ 3 \}$.

Finally, to finish the proof of Theorem \ref{main}, we show that if $L$ is a $\pi$-subgroup of $G$ then
it is conjugate to some subgroup of $H$, where $H$ is a fixed abelian Hall $\pi$-subgroups of $G$.

If $d_{\pi}(G) = 1$ then there is nothing to show by
Propositions~\ref{p3} and~\ref{Hall}. (Or in general we may assume
that $G$ is not $\pi$-solvable.) Thus we may assume that $d_{\pi}(G)
= 2/3$, in particular that $3 \in \pi$ and $d_{\mu}(G) = 1$ where
$\mu = \pi \setminus \{ 3 \}$. (By Sylow's Theorem we may also
assume that $\mu \not= \emptyset$.) Write $H$ in the form $P \times
K$ where $P$ is an abelian Sylow $3$-subgroup of $G$ and $K$ is an
abelian Hall $\mu$-subgroup of $G$.

Since $d_{\mu}(G) = 1$, the group $G$ contains a normal
$\mu$-complement, say $M$, by Proposition~\ref{p3}. So $L$ projects
into $G/M$ with kernel equal to a (normal) Sylow $3$-subgroup
$P_{1}$ of $L$. By the weak version of the Schur-Zassenhaus Theorem
(see \cite[ Theorem~3.3.1]{KS}), $P_{1}$ has an (abelian) complement
$K_{1}$ in $G$. Since $G$ is a $\mu$-solvable group, the subgroup
$K_{1}$ is conjugate to a subgroup of $K$ by Proposition~\ref{Hall}.
Thus, to show that $L$ is conjugate to a subgroup of $H$, we may
assume that $K_{1} \leq K$.

Now consider the $K_{1}$-orbits in $P_{1}$. Since $d_{3}(G) = 2/3$,
these have lengths $1$ or $2$. But there cannot be a $K_{1}$-orbit
of length $2$ in $P_{1}$ since we may assume that $\mu$ consists of
primes at least $5$, as our application of Proposition~\ref{FG}
above shows. This means that $P_{1}$ is central in $L$ and so $L =
P_{1} \times K_{1}$. But then both $P_{1}$ and $P$ lie in
$\Centralizer_{G}(K_{1})$ and so there exists $c \in
\Centralizer_{G}(K_{1})$ with ${P_{1}}^{c} \leq P$. With this same
$c$ we have $L^c \leq H$.

\section{Proof of Theorem \ref{main2}}

In this section we turn to the proof of Theorem \ref{main2}.

Let $G$ be a finite group with $d_{3}(G) = 2/3$ and $O_{3'}(G) = 1$.
Let $P$ be a Sylow $3$-subgroup in $G$. Then $P$ is abelian by
Proposition \ref{G}. By Burnside's Lemma (see Theorem 7.1.5 in
\cite{KS}), we see that $2/3 = d_{3}(G) =
d_{3}(\Normalizer_{G}(P))$. This implies that
$\Normalizer_{G}(P)/\Centralizer_{G}(P)$ has order $2$, the subgroup
$\Normalizer_{G}(P)$ centralizes a subgroup of index $3$ in $P$, and
so $[P,\Normalizer_{G}(P)]$ has order $3$ and $P =
[P,\Normalizer_{G}(P)] \times (P \cap \Center(\Normalizer_{G}(P)))$.

Since $O_{3'}(G) = 1$, the Fitting subgroup $\Fitting(G)$ of $G$ is
a $3$-group. Also, by the Focal Subgroup Theorem, see \cite[pages
165-167]{KS}, we have that $P \cap G'$ has order $3$.

Let $E = \Layer(G)$ be the subgroup generated by the components of
$G$. We claim that $E = 1$ or $E$ is a simple group with a Sylow
$3$-subgroup of order $3$.

Suppose that $E \not= 1$. Then the Sylow $3$-subgroup of $E$ has
order $3$ since $E \leq G'$ and $O_{3'}(G)=1$. Again since
$O_{3'}(G)=1$, the subgroup $\Center(E)$ is a $3$-group. Then, by
the Focal Subgroup Theorem, $\Center(E)$ has order $1$ or $3$. If
$|\Center(E)| = 3$ then $E$ must be the central product of simple
Suzuki groups by \cite{TW}, however a simple Suzuki group cannot
have a Schur multiplier of order divisible by $3$. Thus $\Center(E)
= 1$. So $E$ is a direct product of simple groups and each factor
has order divisible by $3$, whence $E$ is a simple group whose Sylow
$3$-subgroup has order $3$.

So in this case the generalized Fitting subgroup $\Fit(G)$ is $E
\times B$ where $B$ is an abelian $3$-group. Note that $G$
centralizes $B$ (since $B \cap G'=1$) and so $B = \Center(G)$. Thus
$G/B$ embeds in the automorphism group of $E$ (which is almost
simple since $E$ is simple). Inspection of the simple groups with
Sylow $3$-subgroup of order $3$ shows that $G/EB$ has order prime to
$3$. As $H^{2}(E,B)=0$ (since $E$ has a cyclic Sylow $3$-subgroup,
its Schur multiplier has order prime to $3$), the extension $1
\rightarrow B \rightarrow G \rightarrow G/B \rightarrow 1$ splits
and therefore (2) holds.

If $E = 1$, then $\Fit(G) = B$ is a $3$-group and $G/B$ acts
faithfully on $B$, whence $B=P$ and $G/B$ has order $2$. Therefore
(1) holds and the proof is complete.

\section*{Acknowledgement} The authors are grateful to the referee for the careful reading of the manuscript and for supplying them with Theorem~\ref{main2} together with its proof.

\end{document}